\numberwithin{equation}{section}
\newtheorem{theorem}[equation]{Theorem}
\newtheorem{proposition}[equation]{Proposition}
\theoremstyle{definition}
\newtheorem{remark}[equation]{Remark}
\newcommand{\bC}{\mathbf{C}}
\newcommand{\bF}{\mathbf{F}}
\newcommand{\cF}{\mathcal{F}}
\newcommand{\bQ}{\mathbf{Q}}
\newcommand{\fS}{\mathfrak{S}}
\newcommand{\bZ}{\mathbf{Z}}
\newcommand{\arxiv}[1]{\href{http://arxiv.org/abs/#1}{{\tiny\tt arXiv:#1}}}
\newcommand{\DOI}[1]{\href{http://doi.org/#1}{\color{purple}{\tiny\tt DOI:#1}}}
\newcommand{\defn}[1]{\emph{#1}}
\let\ul\underline
\renewcommand{\phi}{\varphi}
\DeclareMathOperator{\len}{len}
\DeclareMathOperator{\Aut}{Aut}
\DeclareMathOperator{\Rep}{Rep}
\newcommand{\GL}{\mathbf{GL}}
\title{A remark on automorphisms of tensor spaces}
\date{July 17, 2025}
\author{Alessandro Danelon}
\address{Department of Mathematics, University of Michigan, Ann Arbor, MI, USA}
\email{\href{mailto:adanelon@umich.edu}{adanelon@umich.edu}}
\urladdr{\url{https://public.websites.umich.edu/~adanelon/}}
\author{Andrew Snowden}
\thanks{AS was supported by NSF grant DMS-2301871.}
\address{Department of Mathematics, University of Michigan, Ann Arbor, MI, USA}
\email{\href{mailto:asnowden@umich.edu}{asnowden@umich.edu}}
\urladdr{\url{http://www-personal.umich.edu/~asnowden/}}
\begin{document}

\begin{abstract}
A \emph{tensor space} is a vector space equipped with a finite collection of multi-linear forms. In recent years, a rich theory of infinite dimensional tensor spaces has emerged. In this note, we show that a large class of permutation groups can occur as the automorphism groups of such tensor spaces. Using this, we show that a tensor space can behave somewhat pathologically as a representation of its automorphism group.
\end{abstract}

\maketitle

\section{Introduction}

A \defn{tensor space} is a complex vector equipped with a finite collection of multilinear forms. To be more precise, for a tuple $\ul{d}=(d_1, \ldots, d_r)$ of positive integers, a \defn{$\ul{d}$-space} is a complex vector space $V$ equipped with multi-linear forms $\omega_i \colon V^{d_i} \to \bC$ for each $1 \le i \le r$. We say that a $\ul{d}$-space is \defn{symmetric} if each $\omega_i$ is symmetric. In the last several years, a rich theory of infinite dimensional tensor spaces has emerged, with connections to algebraic geometry, commutative algebra, and model theory; see \cite{BDDE, isocubic, biquad2, isogeny, homoten, homoten2, KaZ}.

The purpose of this note is to realize a wide class of permutation groups as automorphism groups of tensor spaces. This provides some useful examples to inform our intuition on tensor spaces. The following is simplified version of our main theorem:

\begin{theorem} \label{thm:graph}
Let $X$ be a (perhaps infinite) graph with automorphism group $\Gamma$. Then there is a symmetric $(3,2,2)$-space $V$ with basis indexed by (the vertices of) $X$ such that $\Aut(V) \cong \Gamma$, acting on basis vectors in the natural manner.
\end{theorem}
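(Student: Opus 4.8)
The plan is to realize $V$ concretely on the vertex set and to split the work among the three forms: the cubic $\omega_1$ will rigidify the basis into a discrete set, $\omega_2$ will normalize scalars, and $\omega_3$ will record the adjacency relation. Take $V$ to be the space with basis $\{e_x\}_{x \in X}$ (finite support), write $v = \sum_x v_x e_x$, and set $\omega_1(u,v,w) = \sum_x u_x v_x w_x$, $\omega_2(u,v) = \sum_x u_x v_x$, and $\omega_3(u,v) = \sum_{\{x,y\} \in E(X)} (u_x v_y + u_y v_x)$. Each sum is finite, each form is symmetric of the advertised arity, and every $\gamma \in \Gamma$ acts by $e_x \mapsto e_{\gamma x}$ preserving all three; so the only real task is to show there are no further automorphisms.

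The crux is a rigidity lemma extracted from $\omega_1$ alone: for $v \in V$ consider the quadratic form $Q_v(w) := \omega_1(v,w,w) = \sum_x v_x w_x^2$. Since $v$ has finite support, $Q_v$ is a finite-rank diagonal form whose rank equals $\#\{x : v_x \neq 0\}$; hence $Q_v$ has rank one precisely when $v$ is a nonzero multiple of some $e_x$. If $g \in \GL(V)$ preserves $\omega_1$ then $Q_{gv} = Q_v \circ g^{-1}$, so $g$ preserves the rank of these forms and therefore carries the set of basis lines $\{\bC e_x\}$ bijectively to itself. I would conclude $g(e_x) = c_x e_{\sigma(x)}$ for a bijection $\sigma$ of $X$ and scalars $c_x \in \bC^\times$.

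With $g$ now monomial, the scalars are pinned by the remaining forms: preserving $\omega_2$ gives $c_x^2 = \omega_2(ge_x, ge_x) = 1$, while preserving $\omega_1$ gives $c_x^3 = \omega_1(ge_x, ge_x, ge_x) = 1$, and together these force $c_x = 1$. Thus every automorphism of $(V, \omega_1, \omega_2)$ is a genuine permutation matrix $e_x \mapsto e_{\sigma(x)}$, i.e.\ $\Aut(\omega_1,\omega_2) = \mathrm{Sym}(X)$. Imposing $\omega_3$ then selects exactly those $\sigma$ with $\{x,y\} \in E(X) \iff \{\sigma x, \sigma y\} \in E(X)$, that is $\sigma \in \Gamma$, whence $\Aut(V) \cong \Gamma$ acting as claimed.

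I expect the rigidity lemma to be the only substantive step. Two bilinear forms by themselves always have a positive-dimensional (orthogonal-type) automorphism group, so a form of degree $\ge 3$ is genuinely needed to discretize the basis; the virtue of the rank characterization of $Q_v$ is that it is manifestly intrinsic and survives verbatim in infinite dimensions, because every vector has finite support. The surrounding steps — symmetry and well-definedness of the forms, the scalar computation, and the identification of permutations with graph automorphisms — are routine.
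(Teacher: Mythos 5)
Your proof is correct and takes essentially the same route as the paper: your three forms are exactly the paper's diagonal cubic $g_3$, diagonal quadratic $g_2$, and adjacency form $f_1$, your rank-one criterion for $Q_v(w) = \omega_1(v,w,w)$ is the same test as the paper's criterion that the directional derivative $\partial_v f$ (which equals $3Q_v$) be a power of a linear form, and your scalar-pinning $c_x^2 = c_x^3 = 1$ is the paper's observation that the groups $\mu_2 \wr \fS_X$ and $\mu_3 \wr \fS_X$ intersect in $\fS_X$. One small aside: your closing claim that two bilinear forms always leave a positive-dimensional automorphism group is false (a generic pair of quadratic forms has finite common stabilizer), though the correct point survives in a different form --- with only even-degree forms, $-\mathrm{id}$ is always an automorphism, so an odd-degree form is genuinely needed to realize $\Gamma$ exactly.
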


In the general version of the theorem, $X$ is allowed to be a structure over any finite relational language, though in this case more general $\ul{d}$-spaces are required. Some specific examples are given in \S \ref{s:ex}(a,b,c). Using a variant of this result, in \S \ref{s:ex}(d) we obtain:

\begin{theorem} \label{thm:length}
There is a symmetric $(6, 3)$-space $V$ of countable dimension such that $V$ is an irreducible representation of $\Aut(V)$, but $V^{\otimes 2}$ is an infinite length representation.
\end{theorem}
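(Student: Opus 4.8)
The plan is to realize $\Aut(V)$ as a group $\Gamma$ that acts irreducibly on $V$ yet has infinitely many orbits on $\bZ \times \bZ$; the orbit structure will produce an infinite direct sum decomposition of $V^{\otimes 2}$. The first thing to observe is that these two requirements pull in opposite directions, and that a \emph{pure} permutation group cannot work at all: if $\Aut(V)$ acts transitively by permuting a basis $\{e_x\}$, then the sum functional $e_x \mapsto 1$ is invariant and its kernel is a proper nonzero subrepresentation, so $V$ is reducible. Hence the variant of Theorem~\ref{thm:graph} that we need must produce automorphism groups with a nontrivial diagonal (toral) part, and it is exactly such a torus that simultaneously destroys the sum functional and forces irreducibility.

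Concretely, I would take the index set to be $\bZ$ and build $V = \bC^{(\bZ)}$ with basis $\{e_n\}_{n \in \bZ}$. Let $t$ be an algebraic integer whose minimal polynomial is a trinomial with no roots of unity among its roots, say $t^3 + t + 1 = 0$, and set $M = \bZ[t,t^{-1}]$, a free abelian group of rank $3$. Let $T = \Hom(M,\bC^\times)$ act on $e_n$ through the character $t^n \in M$, and let $\bZ$ act by the shift $\sigma \colon e_n \mapsto e_{n+1}$; since multiplication by $t^{-1}$ is an automorphism of $M$, the shift normalizes $T$ and we obtain $\Gamma = T \rtimes \bZ$ inside $\GL(V)$. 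The relation $t^n + t^{n+1} + t^{n+3} = t^n(1 + t + t^3) = 0$ shows that the symmetric cubic $\omega_2 = \sum_{n \in \bZ} e_n^* \odot e_{n+1}^* \odot e_{n+3}^*$ is $T$-invariant, and it is visibly $\sigma$-invariant; in fact the diagonal automorphisms preserving $\omega_2$ are exactly the $(\lambda_n)$ with $\lambda_n \lambda_{n+1}\lambda_{n+3} = 1$ for all $n$, a group (determined by three of the $\lambda_n$, hence isomorphic to $(\bC^\times)^3$) that is precisely $T$. One then adjoins a suitable $\Gamma$-invariant symmetric sextic $\omega_1$ and invokes the variant of Theorem~\ref{thm:graph} to conclude that $\Aut(V) = \Gamma$ for the resulting symmetric $(6,3)$-space.

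Granting $\Aut(V) = \Gamma$, the two representation-theoretic claims are short. For irreducibility: because $t$ is not a root of unity, the characters $t^n$ are pairwise distinct, so the $T$-weight spaces of $V$ are exactly the lines $\bC e_n$. Any subrepresentation is $T$-stable, hence a coordinate subspace $\mathrm{span}\{e_n : n \in S\}$ with $S \subseteq \bZ$; being $\sigma$-stable forces $S + 1 = S$, so $S = \emptyset$ or $S = \bZ$ and $V$ is irreducible. For the length of $V^{\otimes 2}$: the shift preserves the difference $d = m - n$ of a basis vector $e_n \otimes e_m$ and the torus acts diagonally, so for each $d \in \bZ$ the subspace $U_d = \mathrm{span}\{e_n \otimes e_{n+d} : n \in \bZ\}$ is a nonzero $\Gamma$-submodule. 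As the diagonals partition $\bZ \times \bZ$, we get $V^{\otimes 2} = \bigoplus_{d \in \bZ} U_d$, an infinite direct sum of nonzero submodules, whence $V^{\otimes 2}$ has infinite length.

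The genuinely hard step is the middle one, namely pinning down $\Aut(V) = \Gamma$ exactly, and this is where $\omega_1$ and the full strength of the variant construction enter. Invariance of the forms only gives $\Aut(V) \supseteq \Gamma$; the content is the reverse inclusion. The cubic $\omega_2$ already restricts the diagonal automorphisms to $T$ and constrains the permutation part to the automorphisms of the $3$-uniform hypergraph $\{\{n, n+1, n+3\} : n \in \bZ\}$ on $\bZ$ (which, because the gap pattern $\{0,1,3\}$ is asymmetric, one expects to be only the translations), but it does not by itself prevent automorphisms that fail to be monomial. Ruling those out, i.e.\ forcing every element of $\Aut(V)$ to preserve the basis up to scaling, is precisely the rigidification that the general realization theorem is designed to supply, and checking that the chosen $\omega_1$ delivers it is the crux of~\S\ref{s:ex}(d).
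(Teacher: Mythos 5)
Your representation-theoretic endgame is fine \emph{conditional on} the equality $\Aut(V)=\Gamma$: irreducibility via $T$-weight lines plus shift-transitivity is correct, and the diagonal decomposition of $V^{\otimes 2}$ is correct --- though note that this last step genuinely needs the equality and not just the containment $\Gamma \subseteq \Aut(V)$, since the subspaces $U_d$ must be stable under all of $\Aut(V)$ (irreducibility transfers upward to a bigger group, infinite length does not). But that equality is precisely what you have not proven, and the gap is not a deferrable detail: you never exhibit $\omega_1$, and the tool you invoke to close the gap cannot close it. The paper's variant construction concludes that the automorphism group is $\mu_m \wr \Gamma'$ only for collections of forms that \emph{include the power sum} $g_m=\sum_i x_i^m$; its entire rigidity mechanism is Proposition~\ref{prop:diag} applied to that power sum, which forces every diagonal automorphism to have entries that are $m$th roots of unity. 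No power sum (indeed, no output of that machinery) is invariant under your torus $T\cong(\bC^\times)^3$, whose generic elements are diagonal of infinite order, so the variant theorem is simply inapplicable to any $T$-invariant system of forms and can never certify an automorphism group containing $T$. To carry out your plan you would need a brand-new rigidity theorem: that $\sum_n x_nx_{n+1}x_{n+3}$ together with some explicitly chosen $T$-invariant sextic (say $\sum_n x_n^2x_{n+1}^2x_{n+3}^2$) admits only monomial automorphisms, with permutation part exactly the translations. The derivative criterion in the proof of Proposition~\ref{prop:diag} (detecting when $\partial_v f$ is a power of a linear form) does not apply to such forms, so this would be new work --- and it is the whole content of the theorem. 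Your closing claim that this check ``is the crux of \S\ref{s:ex}(d)'' misreads the paper: \S\ref{s:ex}(d) does nothing of the sort.

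What the paper actually does sidesteps your difficulty by keeping the diagonal part \emph{finite}. It takes the path graph on $\bZ$ and applies the variant construction with $m=3$, producing the symmetric $(6,3)$-space with forms $\sum_i x_i^3x_{i+1}^3$ and $\sum_i x_i^3$, whose automorphism group is exactly $G=\mu_3\wr\Gamma$ with $\Gamma$ the graph's automorphism group: Proposition~\ref{prop:diag} applied to $\sum_i x_i^3$ instantly forces every automorphism to be a permutation matrix times cube roots of unity, and matching the sextic then pins the permutation down to a graph automorphism. The representation theory then follows from the proposition in \S 4: for $k=1<m=3$ the characters of $\mu_3^{\bZ}$ on basis vectors are pairwise distinct and $\Gamma$ acts transitively on $\bZ$, so $V$ is irreducible; for $k=2$ the invariance of the difference of indices gives infinitely many $G$-orbits on $\bZ^2$, hence infinite length. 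Your opening observation --- that a pure permutation group cannot act irreducibly because of the sum functional --- is correct and is exactly why some diagonal part is needed; but the finite group $\mu_3^{\bZ}$ already kills that functional, so passing to a positive-dimensional torus buys nothing and costs you the only rigidity argument available in the paper.
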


A slightly more intricate construction, given in \S \ref{s:ex}(e), yields:

\begin{theorem}
Given $m \ge 3$ there is a symmetric $(m^2+m,m,m)$-space $V$ of countable dimension such that $V^{\otimes k}$ is a finite length representation of $\Aut(V)$ precisely for $k<m$.
\end{theorem}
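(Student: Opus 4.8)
The plan is to deduce this from the general version of Theorem~\ref{thm:graph}, reducing the entire statement to the construction of a single combinatorial structure with a prescribed orbit-growth profile. Concretely, I would produce a relational structure $X$ over a finite relational language, all of whose relations have arity $m$, such that the group $\Gamma = \Aut(X)$ has exactly the right transitivity profile; feeding $X$ into the machinery behind Theorem~\ref{thm:graph} then yields a symmetric tensor space $V$ with $\Aut(V) \cong \Gamma$, where the two relations of $X$ account for the two forms of arity $m$ and the universal rigidifying form accounts for the form of arity $m^2+m$. Under this correspondence each tensor power is the permutation representation $V^{\otimes k} \cong \bC[X^k]$ of $\Gamma$, so the whole theorem becomes a statement purely about the representations $\bC[X^k]$ of $\Gamma = \Aut(X)$.

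The first real step is to pin down what finite length of $\bC[X^k]$ means group-theoretically. Decomposing $\bC[X^k]$ over the $\Gamma$-orbits on $X^k$ and analyzing each transitive constituent $\bC[\Gamma/H]$, with $H$ the stabilizer of a $k$-tuple, I would show that finiteness of length is controlled by the combined orbit-and-stabilizer structure of $\Gamma$ on tuples: infinitely many orbits, or the appearance of an orbit on which $\Gamma$ acts (almost) freely, forces infinitely many composition factors, whereas suitably large stabilizers keep the constituents of finite length. The target is therefore a group $\Gamma$ that behaves oligomorphically up to level $m-1$ — finitely many orbits on $X^{m-1}$, with stabilizers large enough that $\bC[X^k]$ has finite length for all $k<m$ — but which at level $m$ acquires a genuinely new, integer-valued invariant of $m$-tuples (equivalently, an $m$-tuple whose pointwise stabilizer collapses). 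A single break at level $m$ is what should simultaneously produce finite length below $m$ and infinite length from $m$ onward, the latter because the resulting constituent contains a copy of the (infinite length) regular representation of $\Gamma$, or infinitely many non-isomorphic simple constituents.

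To build such an $X$ I would generalize the $m=2$ construction underlying Theorem~\ref{thm:length}. That construction realizes a group with a single integer invariant emerging at the critical level; the natural higher analogue is a structure carrying a $\bZ$-valued (or $\bN$-valued) invariant that is genuinely $m$-ary — constant on all tuples of length $<m$ but taking infinitely many values on $m$-tuples — together with enough auxiliary homogeneity to keep the lower levels finite. I would realize this either as a Fra\"iss\'e-type limit of the class of finite structures carrying such a partial invariant, or as an explicit permutation group of affine/volume type, chosen so that it is $m$-closed (determined by its action on $m$-tuples) and hence equal to the automorphism group of a structure with two $m$-ary relations. Using two relations rather than one is what lets the arity-$m$ forms both encode the invariant and rigidify the space, so that the unwanted diagonal torus is eliminated and $\Aut(V)$ is exactly $\Gamma$; the degree $m^2+m$ of the remaining form is then dictated by the rigidification step in the general construction.

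The main obstacle I anticipate is the two-sided length computation, and it is genuinely delicate precisely because length is not governed by orbit counts alone: one must prove that $\bC[X^k]$ is finite length for every $k<m$ — which requires controlling the representation theory of the stabilizers of $(m-1)$-tuples, not merely bounding the number of orbits — while simultaneously exhibiting an explicit infinite chain of subrepresentations, or infinitely many composition factors, inside $\bC[X^m]$. Threading a single structure $X$ through both requirements, finite length strictly below $m$ and infinite length at and above $m$, while keeping all relations of arity exactly $m$ so that $V$ is literally an $(m^2+m,m,m)$-space and admits no extra automorphisms, is where the real work lies; the slight intricacy advertised before the statement presumably refers to exactly this balancing act.
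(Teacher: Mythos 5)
There is a genuine gap, and it sits exactly where you located the ``real work.'' Your plan routes everything through the general version of Theorem~\ref{thm:graph} (i.e.\ Theorem~\ref{thm:struc}), which produces a space with $\Aut(V) \cong \Gamma$ whose tensor powers are the permutation representations $\bC[X^k]$. But for a permutation group, finitely many orbits on $X^k$ does \emph{not} imply that $\bC[X^k]$ has finite length, and no choice of stabilizer analysis will rescue this in the relevant examples: take $\Gamma = \bZ$ acting on $X = \bZ$ by translation (this is the paper's own example underlying Theorem~\ref{thm:length}); the action is transitive, yet $\bC[X]$ is the regular representation $\bC[t,t^{-1}]$, which has the infinite strictly descending chain of subrepresentations $(t-1) \supset (t-1)^2 \supset \cdots$. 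Since any structure $X$ witnessing the theorem must fail to be oligomorphic (it needs infinitely many orbits on $X^m$), you also cannot appeal to general finite-length results for oligomorphic groups. The paper sidesteps this problem entirely by using its \emph{variant} construction, in which the full symmetry group of $(f'_1,\ldots,f'_r,g_m)$ is the wreath product $\mu_m \wr \Gamma$, not $\Gamma$. The enlarged group is the whole point: for $k<m$ the diagonal subgroup $\mu_m^X$ acts on the basis vectors $e_{\ul{x}}$ of $V^{\otimes k}$ by pairwise distinct characters, so $V^{\otimes k}$ is multiplicity-free over $\mu_m^X$, each orbit span is irreducible, and finite length follows from the finite orbit count alone (the proposition in \S 4, whose hypothesis $k<m$ is exactly why the break occurs at $m$). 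Your proposal lacks this mechanism and leaves the finite-length half as an open problem --- one that, as the example above shows, is false for the natural candidates under your setup.

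Two bookkeeping mismatches confirm that the basic construction cannot be the right reduction. First, Theorem~\ref{thm:struc} always appends the rigidifying forms $g_2$ and $g_3$, so it produces $(d_1,\ldots,d_r,3,2)$-spaces; for $m \ge 3$ the target $(m^2+m,m,m)$ contains no arity-$2$ form, so no choice of $\ul{d}$ works. In the paper's route the arities come out correctly because the variant construction blows up an $(m+1,1)$-structure $X$ by a factor of $m$: the $(m+1)$-ary relation becomes the form of arity $m^2+m$, the unary relation becomes one form of arity $m$, and the diagonal form $g_m$ supplies the other --- so your identification (two structure relations of arity $m$, rigidifying form of arity $m^2+m$) is inverted. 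Second, the concrete structure: the paper takes $X = Y \sqcup \Sigma$, where $Y$ is the Fra\"iss\'e limit of finite $m$-uniform hypergraphs with edges colored by a countable set $\Sigma$, and the $(m+1)$-ary relation says ``$(x_1,\ldots,x_m)$ is an edge of color $x_{m+1}$.'' Your proposed combinatorial target (homogeneous below level $m$, with an infinite-valued invariant emerging at level $m$) is in the right spirit and matches this, but without the wreath-product device the two-sided length computation you flagged as delicate is not just delicate --- it is the step at which the approach breaks.
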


We discuss two specific sources of the motivation for these results. In \cite{homoten, homoten2}, Harman and the second author constructed tensor spaces $V$ with very large automorphism groups $G$; these groups can be thought of as new infinite dimensional Lie groups. In these cases, the tensor powers of $V$ are finite length representations of $G$. Thus, if we define $\Rep(G)$ to be the category of representations generated by the $V^{\otimes k}$, then $\Rep(G)$ is a (non-rigid) abelian tensor category in which every object has finite length. This category resembles the category of polynomial functors, and one might hope to employ it in similar ways; e.g., one might develop a geometric theory similar to the $\GL$-varieties of \cite{polygeom}. We are thus interested in finding more examples where a similar picture holds. One might suspect that as soon as $V$ is finite length, all of its tensor powers are; indeed, this was the case in all previously studied examples. Our results show this is not the case.

Second, we believe it should be possible, in principle, to classify tensor spaces with ``large'' automorphism group, provided that large is defined correctly. In recent work \cite{biquad2}, we classified all symmetric $(2,2)$-spaces $V$ such that $V$ is a length two representation of $\Aut(V)$. In all such cases, tensor powers of $V$ remain finite length. The above results suggest to us that $V$ being finite length is not sufficient for $\Aut(V)$ to count as ``large'' in general. A more plausible largeness condition is ``linearly oligomorphic,'' introduced in \cite{homoten}.

\section{Diagonal forms}

Let $V$ be a vector space with basis $\{e_i\}_{i \in I}$, where $I$ is an index set. Fix $d \ge 3$, and define a symmetric $d$-form $f$ on $V$ by 
\begin{displaymath}
f \big( \sum_{i \in I} x_i e_i \big) = \sum_{i \in I} x_i^d.
\end{displaymath}
Here we are implicity using the equivalence between symmetric $d$-linear forms and homogeneous functions of degree $d$. The form $f$ is clearly preserved by the symmetric group $\fS_I$ on the set $I$. It is also preserved by $\mu_d^I$, the group of diagonal matrices in $\GL(V)$ whose diagonal entries are $d$th roots of unity. Thus the wreath product $\mu_d \wr \fS_I$ is contained in $\Aut(f)$. The following simple result plays a key role in our constructions:

\begin{proposition} \label{prop:diag}
We have $\Aut(f)=\mu_d \wr \fS_I$.
\end{proposition}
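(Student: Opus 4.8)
The containment $\mu_d \wr \fS_I \subseteq \Aut(f)$ was already observed above, so the entire content of the proposition is the reverse inclusion: every $g \in \Aut(f)$ must be a \emph{monomial} transformation, sending each basis vector to a scalar multiple of another basis vector, with the scalars forced to be $d$th roots of unity. My plan is to first produce a basis-free characterization of the coordinate lines $\bC e_i$ that is manifestly preserved by $\Aut(f)$, deduce from it that any $g$ permutes these lines, and only then read off the admissible scalars.

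The key tool will be the second polar form of $f$ at a point $p \in V$, namely the symmetric bilinear form
\begin{displaymath}
B_p(u,v) = \frac{\partial^2}{\partial s\, \partial t}\Big|_{s=t=0} f(p + su + tv).
\end{displaymath}
A direct computation for the diagonal form gives $B_p(u,v) = d(d-1)\sum_{i} p_i^{d-2} u_i v_i$, so $B_p$ is diagonal with entries $d(d-1)\, p_i^{d-2}$. Since each $p$ has finite support and we work over $\bC$ with $d \ge 3$ (so that $p_i^{d-2} = 0 \iff p_i = 0$), the rank of $B_p$ equals the number of nonzero coordinates of $p$; in particular $B_p$ has rank exactly $1$ if and only if $p$ is a nonzero scalar multiple of some $e_i$. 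This is precisely the characterization I want, and it is intrinsic: differentiating the identity $f(gx)=f(x)$ gives $B_{gp}(gu,gv)=B_p(u,v)$, whence $g$ carries $\mathrm{rad}(B_p)$ isomorphically onto $\mathrm{rad}(B_{gp})$ and preserves rank. I regard this step — isolating the rank of the polar form as the invariant that detects coordinate lines, and noticing that $d \ge 3$ is exactly what makes it detect support — as the crux of the argument; everything else is bookkeeping. (For $d=2$ the form $B_p$ is independent of $p$ and the characterization collapses, consistent with $\Aut(f)$ being the full orthogonal group.)

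Granting this, for each $i$ the line $\bC e_i$ has rank-one polar form, hence so does its image $g\, \bC e_i$, forcing $g e_i = \lambda_i e_{\sigma(i)}$ for some scalar $\lambda_i \neq 0$ and index $\sigma(i) \in I$. Applying the same reasoning to $g^{-1} \in \Aut(f)$, together with the invertibility of $g$, shows $\sigma$ is a bijection of $I$, so $g$ is a generalized permutation matrix. Finally I would substitute $g$ back into the invariance condition: for $x = \sum_i x_i e_i$ one computes $f(gx) = \sum_i \lambda_i^d x_i^d$, and comparing with $f(x) = \sum_i x_i^d$ coefficientwise forces $\lambda_i^d = 1$, i.e.\ $\lambda_i \in \mu_d$. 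Thus $g \in \mu_d \wr \fS_I$, which establishes the reverse inclusion and completes the proof.
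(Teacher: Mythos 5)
Your proof is correct, and it shares the paper's overall skeleton---find a differential invariant of $f$ that singles out the coordinate lines $\bC e_i$, conclude that any $g \in \Aut(f)$ is a monomial transformation, then pin the scalars down to $\mu_d$---but the invariant you use is genuinely different from the paper's. The paper works with the first directional derivative: $v$ is a scalar multiple of a basis vector if and only if $\partial_v f$ is a power of a linear form, which is where $d \ge 3$ enters there. You instead use the rank of the second polar form $B_p$, which detects the support of $p$ because $d \ge 3$ makes the diagonal entry $p_i^{d-2}$ vanish exactly when $p_i$ does. Your choice buys something concrete: invariance of the rank under $g$ is pure linear algebra, immediate from $B_{gp}(gu,gv)=B_p(u,v)$, whereas the paper's property ``$\partial_v f$ is a power of a linear form'' needs both a chain-rule computation ($\partial_{gv}f = (\partial_v f)\circ g^{-1}$) to see it is $\Aut(f)$-invariant and a short, omitted verification that a sum of two or more $(d-1)$st powers of distinct variables is not such a power. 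You also make explicit the bookkeeping that the paper compresses into ``the result now easily follows'': bijectivity of $\sigma$ via applying the argument to $g^{-1}$, and the coefficientwise comparison $f(gx)=\sum_i \lambda_i^d x_i^d = \sum_i x_i^d$ forcing $\lambda_i \in \mu_d$. The paper's route is more compact; yours is more self-contained and its key step is harder to get wrong. Both, as you note, hinge on $d\ge 3$ and correctly degenerate at $d=2$.
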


\begin{proof}
For a vector $v \in V$, we let $\partial_v f$ denote the partial derivative of $f$ in the $v$ direction. If $v=\sum_{i \in I} a_i e_i$ then
\begin{displaymath}
(\partial_v f) \big( \sum_{i \ge 1} x_i e_i \big) = \sum_{i \in I} d a_i x_i^{d-1}.
\end{displaymath}
In particular, we see that $v$ is a scalar multiple of a basis vector if and only if $\partial_v f$ is a power of a linear form. This shows that if $g \in \Aut(f)$ then $ge_i$ is a scalar multiple of some $e_j$. The result now easily follows.
\end{proof}

\section{The main theorem}

Fix a tuple $\ul{d}=(d_1, \ldots, d_r)$ of positive integers. A \emph{$\ul{d}$-structure} is a set $X$ equipped with relations $P_1, \ldots, P_r$, where $P_i$ has arity $d_i$. One can think of $P_i$ as a function $X^{d_i} \to \{0,1\}$, where~0 represents false and~1 true. Fix such a structure $X$, and let $\Gamma=\Aut(X)$. Let $V$ be a vector space with basis $\{e_x\}_{x \in X}$. Define a $d_i$-form $f_i$ on $V$ by
\begin{displaymath}
f_i(e_{x_1}, \ldots, e_{x_{d_i}}) = P_i(x_1, \ldots, x_{d_i}),
\end{displaymath}
and extending linearly. We also define a symmetric $k$-form $g_k$ on $V$ by
\begin{displaymath}
g_k(e_{x_1}, \ldots, e_{x_k}) = \begin{cases}
1 & \text{if $x_1=\cdots=x_k$} \\
0 & \text{otherwise.} \end{cases}
\end{displaymath}
It is clear that the natural action of $\Gamma$ on $V$ preserves the $f_i$ and the $g_k$. In fact:

\begin{theorem} \label{thm:struc}
$\Gamma$ is the full symmetry group of $(f_1, \ldots, f_r, g_2, g_3)$.
\end{theorem}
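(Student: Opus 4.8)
The plan is to reduce an arbitrary symmetry of the tuple to an honest permutation of the basis, and then match that permutation with an automorphism of the structure $X$. Here a \emph{symmetry} means a $g \in \GL(V)$ preserving each of $f_1, \ldots, f_r, g_2, g_3$; the inclusion $\Gamma \subseteq \Aut(f_1, \ldots, f_r, g_2, g_3)$ is already recorded before the statement, so only the reverse inclusion requires work.

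The first and decisive step is to recognize that $g_3$ is exactly the diagonal cubic form of Section~2 (take $I = X$ and $d = 3$): under the identification of symmetric forms with homogeneous functions, $g_3$ sends $\sum_x t_x e_x$ to $\sum_x t_x^3$. Since any symmetry preserves $g_3$ in particular, Proposition~\ref{prop:diag} forces the symmetry group into $\Aut(g_3) = \mu_3 \wr \fS_X$. Concretely, every symmetry $g$ has the form $g e_x = \zeta_x e_{\sigma(x)}$ for some permutation $\sigma \in \fS_X$ and cube roots of unity $\zeta_x$. This is where essentially all the content lies; the rest is bookkeeping.

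Next I would use $g_2$ to eliminate the scalars $\zeta_x$. Evaluating preservation of $g_2$ on the pair $(e_x, e_x)$ gives $\zeta_x^2 \, g_2(e_{\sigma(x)}, e_{\sigma(x)}) = g_2(e_x, e_x)$, i.e.\ $\zeta_x^2 = 1$; combined with $\zeta_x^3 = 1$ this yields $\zeta_x = 1$. Hence every symmetry is a genuine basis permutation $g e_x = e_{\sigma(x)}$. Finally, evaluating preservation of each $f_i$ on basis tuples turns $f_i(e_{\sigma(x_1)}, \ldots, e_{\sigma(x_{d_i})}) = f_i(e_{x_1}, \ldots, e_{x_{d_i}})$ into $P_i(\sigma x_1, \ldots, \sigma x_{d_i}) = P_i(x_1, \ldots, x_{d_i})$ for all tuples and all $i$. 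This is precisely the condition that $\sigma$ preserves every relation, i.e.\ $\sigma \in \Aut(X) = \Gamma$, which closes the argument.

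I expect no serious obstacle beyond correctly invoking Proposition~\ref{prop:diag}, but I would emphasize that including \emph{both} $g_2$ and $g_3$ is essential rather than redundant: a form of degree $\ge 3$ such as $g_3$ is what rigidifies the linear structure and produces a basis permutation up to roots of unity, while $g_2$ is exactly what cuts the residual cube-root-of-unity scalars down to the identity. Without $g_2$, diagonal scalar symmetries could persist (they need not preserve the $f_i$, so they would not generally be ruled out), and the clean, structure-independent identification of the symmetry group with $\Gamma$ would break down.
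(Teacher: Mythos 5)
Your proposal is correct and follows essentially the same route as the paper: Proposition~\ref{prop:diag} reduces any symmetry to a monomial transformation, $g_2$ then forces the scalars to be trivial, and preservation of the $f_i$ identifies the resulting permutation with an automorphism of $X$. If anything, your version is slightly more careful than the paper's wording, which formally invokes Proposition~\ref{prop:diag} for $g_2$ as well even though that proposition is stated only for $d \ge 3$; your direct evaluation of $g_2$ on the pairs $(e_x, e_x)$, performed inside $\mu_3 \wr \fS_X$, sidesteps this point.
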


\begin{proof}
By Proposition~\ref{prop:diag}, the subgroup of $\GL(V)$ preserving $g_k$ is $\mu_k \wr \fS_X$. From this, it follows that the subgroup preserving both $g_2$ and $g_3$ is simply $\fS_X$. Finally, an element of $\fS_X$ preserves the forms $f_1, \ldots, f_r$ if and only if it is an automorphism of the structure $X$. Thus the result follows.
\end{proof}

We now prove a variant of Theorem~\ref{thm:struc}. Fix $m \ge 3$. Define an $md_i$ relation $P'_i$ on $X$ by
\begin{displaymath}
P'_i(x_{1,1}, \ldots, x_{d_i,m}) = \begin{cases}
P_i(x_{1,1}, x_{2,1}, \ldots, x_{d_i,1}) & \text{if $x_{a,b}=x_{a,c}$ for all $a$, $b$, $c$} \\
0 & \text{otherwise} \end{cases}
\end{displaymath}
One should think of the inputs $x_{1,1}, \ldots, x_{d_i,m}$ as consisting of $d_i$ blocks, each of size $m$, and the value of $P'_i$ is only non-zero if each block is constant. Let $X'$ be the $(m \ul{d})$-structure with relations $(P'_1, \ldots, P'_r)$. We note that $\Aut(X')=\Gamma$. Let $f'_i$ be the $md_i$-form on $V$ associated to $P'_i$, and let $g_m$ be as above.

\begin{theorem}
$\mu_m \wr \Gamma$ is the full symmetry group of $(f'_1, \ldots, f'_r, g_m)$.
\end{theorem}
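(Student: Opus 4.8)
The plan is to follow the strategy of Theorem~\ref{thm:struc}, with $g_m$ now playing the role that $g_2$ and $g_3$ played there. The containment of $\mu_m \wr \Gamma$ in the symmetry group is immediate: $\Gamma$ preserves each $P'_i$ by construction (so it preserves each $f'_i$), it preserves $g_m$, and the diagonal part $\mu_m^X$ preserves $g_m$ as well as each $f'_i$ (the latter because $P'_i$ is supported on tuples in which each of the $d_i$ blocks is constant of size $m$, over which an $m$th root of unity contributes an $m$th power). For the reverse containment, I would first invoke Proposition~\ref{prop:diag} with $d=m$: since $m \ge 3$ and $g_m$ is exactly the diagonal form $\sum_x x_x^m$, the subgroup of $\GL(V)$ preserving $g_m$ is precisely $\mu_m \wr \fS_X$. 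Hence any element $g$ of the full symmetry group is monomial, say $g e_x = \zeta_x e_{\sigma(x)}$ with $\zeta_x \in \mu_m$ and $\sigma \in \fS_X$, and it remains only to determine which such $g$ also preserve every $f'_i$.

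The heart of the argument is a direct computation of the action of such a $g$ on $f'_i$. Writing the inputs as $e_{x_{a,b}}$ with $1 \le a \le d_i$ indexing the blocks and $1 \le b \le m$ indexing positions within a block, multilinearity gives
\[
f'_i(g e_{x_{1,1}}, \ldots, g e_{x_{d_i,m}}) = \Big( \prod_{a,b} \zeta_{x_{a,b}} \Big) \, f'_i(e_{\sigma(x_{1,1})}, \ldots, e_{\sigma(x_{d_i,m})}).
\]
Since $\sigma$ is a bijection, the form on the right is nonzero precisely when each block $\{x_{a,b}\}_b$ is constant, say with common value $y_a$; this is the same condition under which $f'_i(e_{x_{1,1}}, \ldots, e_{x_{d_i,m}})$ is nonzero. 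In that case the scalar prefactor is $\prod_a \zeta_{y_a}^m = 1$, because each block has exactly $m$ entries and $\zeta_{y_a} \in \mu_m$. Thus preservation of $f'_i$ is equivalent to $P_i(\sigma(y_1), \ldots, \sigma(y_{d_i})) = P_i(y_1, \ldots, y_{d_i})$ for all $y_1, \ldots, y_{d_i}$, i.e.\ to $\sigma$ being an automorphism of the relation $P_i$.

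Running this over all $i$ shows that the permutation part $\sigma$ must lie in $\bigcap_i \Aut(P_i) = \Gamma$, while the roots of unity $\zeta_x$ are entirely unconstrained; hence the full symmetry group is exactly $\mu_m \wr \Gamma$. The one point that requires care — and the crux of why this variant produces $\mu_m \wr \Gamma$ rather than collapsing to $\Gamma$ as in Theorem~\ref{thm:struc} — is the cancellation of the diagonal scalars: it works precisely because the block length $m$ matches the order of the roots of unity retained by $g_m$, so that each constant block contributes a trivial $m$th power. I expect this cancellation to be the only genuinely delicate step, the remaining verifications being routine.
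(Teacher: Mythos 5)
Your proof is correct and takes essentially the same approach as the paper's: both reduce to monomial matrices by applying Proposition~\ref{prop:diag} with $d=m$ (valid since $m \ge 3$), and then observe that preservation of the $f'_i$ constrains only the permutation part to lie in $\Gamma$, the diagonal scalars cancelling because each block of size $m$ contributes an $m$th power of an $m$th root of unity. The only difference is one of exposition: the paper compresses your block-cancellation computation into the phrase ``it is clear,'' whereas you spell it out.
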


\begin{proof}
By Proposition~\ref{prop:diag}, the subgroup of $\GL(V)$ preserving $g_m$ is $\mu_m \wr \fS_X$. It is clear that an element of this group preserves the $P'_i$ if and only if the permutation is an automorphism of the structure $X$. The result follows.
\end{proof}

\begin{remark}
There is a variant for $m=2$ as well: $\mu_2 \wr \Gamma$ is the full symmetry group of the forms $(f'_1, \ldots, f'_r, g_2, g_4)$.
\end{remark}

\section{Some representation theory}

Let $\Gamma$ be a permutation group on a set $X$, and let $V$ be the vector space with basis $\{e_x\}_{x \in X}$. Fix $m \ge 1$, and let $G=\mu_m \wr \Gamma$ act on $V$ in the usual manner, i.e., $\Gamma$ permutes the basis and the normal subgroup $\mu_m^X$ acts by diagonal matrices. We write $\len_G(-)$ for the length of a $G$-module. We require the following simple result:

\begin{proposition}
We have the following, for $k \ge 1$:
\begin{enumerate}
\item If $\Gamma$ has finitely many orbits on $X^k$ and $k<m$ then $\len_G(V^{\otimes k})<\infty$.
\item If $\Gamma$ has infinitely many orbits on $X^k$ then $\len_G(V^{\otimes k})=\infty$.
\end{enumerate}
\end{proposition}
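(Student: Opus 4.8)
The plan is to combine the weight-space decomposition of $V^{\otimes k}$ under the normal abelian subgroup $N=\mu_m^X$ with the decomposition of $V^{\otimes k}$ into $\Gamma$-orbit pieces. A basis of $V^{\otimes k}$ is given by the tensors $e_{x_1}\otimes\cdots\otimes e_{x_k}$ for $(x_1,\ldots,x_k)\in X^k$; on such a tensor $N$ acts diagonally through the character determined by the multiplicity function $n_x=\#\{j: x_j=x\}$ read modulo $m$, while $\Gamma$ permutes these basis tensors. For each $\Gamma$-orbit $O\subseteq X^k$ let $M_O$ be the span of the corresponding tensors. Since $\Gamma$ preserves $O$ and $N$ scales each basis line, $M_O$ is a $G$-submodule, and $V^{\otimes k}=\bigoplus_O M_O$.

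Part (b) is then immediate: if $\Gamma$ has infinitely many orbits on $X^k$, then $V^{\otimes k}$ is an infinite direct sum of nonzero $G$-submodules, which yields an infinite strictly ascending chain of submodules, so $\len_G(V^{\otimes k})=\infty$.

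For part (a) I would bound $\len_G(M_O)$ for a single orbit and then sum over the finitely many orbits. First note that $G$ acts transitively on the set of $N$-weights occurring in $M_O$, because $\Gamma$ acts transitively on $O$ and carries the weight of a tuple to the weight of its image. Fix one such weight $\chi_0$. The crux is that, since $k<m$, every multiplicity satisfies $n_x\le k<m$, so the character $\chi_0$ recovers the multiplicity function exactly; hence the weight space $(M_O)_{\chi_0}$ is spanned by the orderings (at most $k!$ of them) of one fixed multiset, and is finite-dimensional. Now any $G$-submodule $W\subseteq M_O$ is $N$-stable, hence decomposes along weights as $W=\bigoplus_\chi W_\chi$ with $W_\chi=W\cap (M_O)_\chi$; transitivity of the $G$-action on weights gives $W_\chi=g\,W_{\chi_0}$ whenever $g\chi_0=\chi$, so $W$ is determined by the single subspace $W_{\chi_0}\subseteq (M_O)_{\chi_0}$. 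Thus $W\mapsto W_{\chi_0}$ embeds the lattice of $G$-submodules of $M_O$ injectively and order-preservingly into the subspace lattice of the finite-dimensional space $(M_O)_{\chi_0}$, whence $\len_G(M_O)\le \dim (M_O)_{\chi_0}\le k!$. Summing over the finitely many orbits gives $\len_G(V^{\otimes k})<\infty$.

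The main obstacle---and the only place the hypothesis $k<m$ enters---is establishing finite-dimensionality of the weight space $(M_O)_{\chi_0}$. When $k\ge m$ a single character can be shared by tuples with genuinely different multisets (for example, when $m\mid k$ the entire "diagonal" orbit of tuples $(x,\ldots,x)$ collapses into the trivial weight), so the weight space can be infinite-dimensional and the argument breaks down. Everything else is routine bookkeeping with the weight decomposition and the Clifford-type transitivity of $G$ on weights.
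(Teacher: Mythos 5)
Your proof is correct, and although it shares its skeleton with the paper's (the decomposition of $V^{\otimes k}$ into spans of $\Gamma$-orbits, and the treatment of part (b), are identical), the key step in part (a) is genuinely different---and your version is the one that actually works. The paper asserts that $k<m$ forces $\alpha_{\ul{x}}\ne\alpha_{\ul{y}}$ for \emph{all} distinct tuples $\ul{x},\ul{y}\in X^k$, i.e.\ that $V^{\otimes k}$ is multiplicity-free over $\mu_m^X$, and deduces that each orbit piece $W_j$ is irreducible, so that the length equals the number of orbits. As you correctly note, the character of $\mu_m^X$ on $e_{\ul{x}}$ only remembers the multiplicity function $n_x \bmod m$, so tuples that are rearrangements of one another always share a character: multiplicity-freeness fails for every $k\ge 2$. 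Moreover, the irreducibility claim itself is false, not just its proof: if some $g\in\Gamma$ swaps two points $a,b\in X$ (as happens, e.g., for the Rado graph example of \S 5(c)), then $(a,b)$ and $(b,a)$ lie in the same orbit $Y_j$, and the span of the vectors $e_{x}\otimes e_{y}+e_{y}\otimes e_{x}$ with $(x,y)\in Y_j$ is a proper nonzero $G$-submodule of $W_j$, since $\Gamma$ permutes these symmetrized vectors and $\mu_m^X$ scales the two terms of each by the same character value. Your repair---when $k<m$ the character pins down the multiset exactly, so each $\mu_m^X$-weight space of an orbit piece $M_O$ has dimension at most $k!$, and the Clifford-type argument (submodules decompose along weights, $G$ permutes the weights occurring in $M_O$ transitively, hence $W\mapsto W_{\chi_0}$ embeds the submodule lattice of $M_O$ order-preservingly and injectively into the subspace lattice of $(M_O)_{\chi_0}$)---yields the correct bound $\len_G(V^{\otimes k})\le k!\cdot|J|$ with $|J|$ the number of orbits, which is all the paper's applications in \S 5 need. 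What you lose relative to the paper's intended argument is only the exact count of composition factors, which was wrong anyway for $k\ge 2$; what you gain is a valid proof. Your closing observation about why $k<m$ is essential (for $m\mid k$ all diagonal tuples $(x,\ldots,x)$ share the trivial weight, producing an infinite-dimensional weight space) is also apt.
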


\begin{proof}
For $\ul{x} \in X^k$, let $e_{\ul{x}}$ be the corresponding basis vector of $V^{\otimes k}$. Let $\{Y_j\}_{j \in J}$ be the orbits of $G$ on $X^k$, and let $W_j$ be the subspace of $V$ spanned by the $e_{\ul{x}}$ with $\ul{x} \in Y_j$. Then each $W_j$ is a subrepresentation of $V$, and $V$ is the direct sum of the $W_j$'s. Thus if there are infinitely many orbits then $V$ has infinite length. This proves (b).

Suppose now that there are finitely many orbits and $k<m$. For each $\ul{x} \in X^k$ there is a homomorphism $\alpha_{\ul{x}} \colon \mu_m^X \to \bC^{\times}$ such that $ge_{\ul{x}} = \alpha(g) e_{\ul{x}}$ for all $g \in \mu_m^X$. Moreover, the assumption that $k<m$ ensures that if $\ul{x}$ and $\ul{y}$ are distinct elements of $X^k$ then $\alpha_{\ul{x}} \ne \alpha_{\ul{y}}$. In other words, $V^{\otimes k}$ is a multiplicity free representation of $\mu_m^X$. It follows that each $W_j$ is an irreducible representation. Indeed, if $y$ is a non-zero vector in $W_j$ then some $e_{\ul{x}}$ with $\ul{x} \in Y_j$ appears with non-zero coefficient in $y$. Since the different basis vectors have different eigenvalues for $\mu_m^X$, it follows that $e_{\ul{x}} \in W_j$. Since $G$ acts transitively on the basis vectors $Y_j$ of $W_j$, we thus see that $y$ generates all of $W_j$. It thus follows that $V^{\otimes k}$ has finite length, which proves (a).
\end{proof}

\section{Examples} \label{s:ex}

(a) Let $X=\bQ$, regarded as a totally ordered set; that is, $X$ is a $(2)$-structure where $P_1(x,y)$ is the relation $x<y$. Our construction produces a $(3,2,2)$-space $V$ with basis indexed by $\bQ$ such that $\Aut(V)$ is the group of order preserving self-bijections of $\bQ$.

(b) Let $\bF$ be a finite field, and fix a primitive root $a$. Let $X=\bigcup_{n \ge 1} \bF^n$. We define a $(3,2)$-structure on $X$ by letting $P_1(x,y,z)$ be the relation $x=y+z$, and $P_2(x,y)$ be the relation $x=ay$. An automorphism of this structure is exactly a linear automorphism of $X$, and so the automorphism group of $X$ is the infinite general linear group $\GL_{\infty}(\bF)$. Our construction thus produces a $(3,3,2,2)$-space $V$ with basis indexed by $X$ having $\Aut(V)=\GL_{\infty}(\bF)$. Note that $V$ is a complex vector space!

(c) Fix $\ul{d}=(d_1, \ldots, d_r)$, and let $\cF$ be a class of finite $\ul{d}$-structures. If $\cF$ is a \defn{Fra\"iss\'e class} then we can form its Fra\"iss\'e limit $X$, which is a countable structure built by glueing together the structures in $\cF$ in a particular manner; see \cite[\S 2.6]{Cameron} or \cite{Macpherson} for details. The structure $X$ is highly symmetrical: its automorphism group $\Gamma$ is \defn{oligomorphic}, meaning that $\Gamma$ has finitely many orbits on $X^k$ for all $k$. Our construction produces a $(d_1, \ldots, d_r, 3, 2)$-space $V$ with basis indexed by $X$ having $\Aut(V)=\Gamma$.

For example, if $\cF$ is the class of finite totally ordered sets then the Fra\"iss\'e limit is the totally ordered set $\bQ$, and we recover example (a) above. Example (b) does not quite fit into this set-up since to obtain $\bF^{\infty}$ as a Fra\"iss\'e limit requires working over an infinite relational language. For another example, take $\cF$ to be the class of all finite graphs. The Fra\"iss\'e limit in this case is the famous Rado graph \cite[\S 2.10]{Cameron}. We thus obtain a symmetric $(3,2,2)$-space $V$ where $\Aut(V)$ is the automorphism group of the Rado graph.

(d) Let $X$ be the graph with vertex set $\bZ$ in which $n$ and $n+1$ are connected for all $n$; this is the Cayley graph for the group $\bZ$ with generating set $\{1\}$. The automorphism group $\Gamma$ of $X$ is simply $\bZ$. Our variant construction, with $m=3$, produces a symmetric $(6,3)$-space $V$ with basis indexed by $\bZ$ having automorphism group $G=\mu_3 \wr \bZ$. Explicitly, if $\{x_i\}_{i \in \bZ}$ are the standard coordinates on $V$ then the forms are given by
\begin{displaymath}
\sum_{i \in \bZ} x_i^3 x_{i+1}^3, \qquad
\sum_{i \in \bZ} x_i^3.
\end{displaymath}
Since $\Gamma$ acts transitively on $X$ but with infinitely many orbits on $X^2$, we see that $V$ is an irreducible representation of $G$, but $V^{\otimes 2}$ has infinite length.

(e) We now generalize the previous example. Given any $m \ge 3$, we produce a symmetric $(m+1,1)$-structure $X$ such that its automorphism group $\Gamma$ has finitely many orbits on $X^k$ for $1 \le k < m$ but infinitely many orbits on $X^m$. Applying our variant construction produces a symmetric $(m^2+m,m,m)$-space $V$ with basis indexed by $X$ having automorphism group $G=\mu_m \wr \Gamma$. We find that $V^{\otimes k}$ is a finite length representation of $G$ for $k<m$, but has infinite length for $k \ge m$.

We now explain how to produce these structures. Fix $m \ge 1$ and a countable infinite set $\Sigma$. Define an $(m,\Sigma)$-hypergraph to be a set equipped with an $m$-uniform hypergraph structure for each element of $\Sigma$. One can think of this as an $m$-uniform hypergraph with edges colored by $\Sigma$, where parallel edges of different colors are allowed. Let $\cF$ be the class of all $(m,\Sigma)$-hypergraphs having finitely many vertices and edges. This is a Fra\"iss\'e class over a countably infinite relational language. Let $Y$ be the Fra\"iss\'e limit, and let $\Delta=\Aut(Y)$. The orbits of $\Delta$ on the set of $k$-element subsets of $Y$ correspond bijectively to isomorphism classes of $k$-element structures in $\cF$. It follows that $\Delta$ has finitely many orbits on $Y^k$ for $k<m$, but infinitely many orbits on $Y^m$.

Now, let $X=Y \coprod \Sigma$. We define an $(m+1,1)$-structure $(P_1, P_2)$ on $X$ as follows. For $a \in \Sigma$, let $Q_a \subset Y^m$ be the edges in the $a$-hypergraph structure. We take
\begin{displaymath}
P_1 = \coprod_{a \in \Sigma} (Q_a \times\{a\}), \qquad P_2=\Sigma.
\end{displaymath}
In other words, $P_1(x_1, \ldots, x_m, x_{m+1})$ is true precisely when $x_{m+1}$ is an element of $\Sigma$ and $(x_1, \ldots, x_m)$ is an edge in the $x_{m+1}$-hypergraph structure on $Y$, while $P_2(x)$ is true precisely when $x \in \Sigma$.

Let $\Gamma$ be the automorphism group of the structure $X$. The group $\Delta$ acts on $X$ by acting trivially on $\Sigma$, and this gives an inclusion $\Delta \subset \Gamma$. Suppose $\sigma$ is a permutation of $\Sigma$, and let $\sigma^*(Y)$ be the structure obtained by permuting the colors on the edges of $Y$. One easily sees that this is also the Fra\"iss\'e limit of the class $\cF$, and thus isomorphic to $X$; choosing an isomorphism $\tau \colon X \to \sigma^*(X)$, we see that $(\tau, \sigma)$ is an automorphism of the structure $Y$. We thus obtain an exact sequence of groups
\begin{displaymath}
1 \to \Delta \to \Gamma \to \fS_{\Sigma} \to 1,
\end{displaymath}
where $\fS_Y$ is the automorphism group of $Y$. From this description, it is clear that $\Gamma$ has finitely many orbits on $X^k$ for $k<m$. On the other hand, it has infinitely many orbits on $X^m$, since $m$ vertices of $Y$ can belong to any finite number of edges of different colors.

\end{document}